\documentclass[reqno,11pt]{amsart}
\usepackage{amssymb,amsmath}
\evensidemargin20pt
\oddsidemargin20pt
\textwidth6in

\numberwithin{equation}{section}
\newtheorem{theorem}[equation]{Theorem}
\newtheorem{lemma}[equation]{Lemma}
\newtheorem{proposition}[equation]{Proposition}
\newtheorem{corollary}[equation]{Corollary}

\theoremstyle{definition}
\newtheorem{example}[equation]{Example}
\newtheorem*{remark}{Remark}

\title{Linear recurrence sequences and their convolutions via Bell polynomials}
\author{Daniel Birmajer}
\address{Department of Mathematics\\ Nazareth College\\ 4245 East Ave.\\ Rochester, NY 14618}
\author{Juan B. Gil}
\address{Penn State Altoona\\ 3000 Ivyside Park\\ Altoona, PA 16601}
\author{Michael D. Weiner}

\begin{document}
\maketitle

\begin{abstract}
We recast homogeneous linear recurrence sequences with fixed coefficients in terms of partial Bell polynomials, and use their properties to obtain various combinatorial identities and multifold convolution formulas. Our approach relies on a basis of sequences that can be obtained as the INVERT transform of the coefficients of the given recurrence relation. For such a basis sequence $(y_n)$ with generating function $Y(t)$, and for any positive integer $r$, we give a formula for the convolved sequence generated by $Y(t)^r$ and prove that it satisfies an elegant recurrence relation.
\end{abstract}

\section{Introduction}
\label{sec:introduction}

A linear recurrence sequence $(a_n)$ of elements in a commutative ring $\mathcal R$ is a sequence given by a homogeneous linear recurrence relation
\begin{equation}\label{eq:recRelation}
 a_n=c_1 a_{n-1}+c_2a_{n-2}+\cdots+c_da_{n-d} \;\text{ for } n\ge d,
\end{equation}
with fixed coefficients $c_1,\dots,c_d\in\mathcal R$, together with initial values $a_0, a_1,\dots, a_{d-1}\in\mathcal R$. The generating function of such a sequence is a rational function of the form $f(t)=\frac{p(t)}{q(t)}$ with
\begin{equation*}
 q(t)=1-c_1t-c_2t^2-\cdots-c_dt^d,
\end{equation*}
and a polynomial $p(t)$ of degree at most $d-1$ that depends on the initial values. In other words, the function $f(t)$ is a linear combination of the rational functions 
\begin{equation*}
 \frac{1}{q(t)},\;  \frac{t}{q(t)},\;  \frac{t^2}{q(t)},\, \dots,\,  \frac{t^{d-1}}{q(t)}, 
\end{equation*}
and their respective associated sequences form a basis for the space of linear recurrence sequences with coefficients $c_1,\dots,c_d$. 

On the other hand, if $(c_n)$ is a sequence and $Q(t)$ is the formal power series 
\[ Q(t) = 1-\sum_{n=1}^\infty c_n t^n, \]
then its reciprocal $Y(t)=\frac{1}{Q(t)}$ can be written as $Y(t)=1+\sum_{n=1}^\infty y_n t^n$ with
\begin{equation}\label{eq:canonicalSeq}
 y_n = \sum_{k=0}^n \frac{k!}{n!} B_{n,k}(1!c_1, 2!c_2, 3!c_3, \dots),
\end{equation}
where $B_{n,k}=B_{n,k}(x_1,x_2,\dots)$ denotes the $(n,k)$-th partial Bell polynomial in the variables $x_1,x_2,\dots,x_{n-k+1}$. This is a direct consequence of Fa{\`a} di Bruno's formula (cf. Theorem~B in \cite[Section~3.5]{Comtet}), and the sequence $(y_n)$ is precisely the INVERT\footnote{%
See \cite{BernsteinSloane}. This is the operator $A$ discussed in \cite{Cameron}.} 
transform of $(c_n)$. 

Observe that if $c_n=0$ for all $n>d$, then $1/q(t)=1/Q(t)$. Consequently, any linear recurrence sequence with fixed coefficients can be expressed in terms of partial Bell polynomials in the coefficients of the recurrence. An explicit formula is given in Section~\ref{sec:LinearRecurrence}, see formula \eqref{eq:BellRepresentation}, together with a few illustrating examples. As a particular application, we provide an alternative derivation of the Girard-Waring formulas for the power sum symmetric functions.

The benefit of the representation \eqref{eq:BellRepresentation} is that the partial Bell polynomials absorb the coefficients of the recurrence and facilitate the derivation of universal identities. This is particularly convenient when working with multifold self-convolutions. In Section~\ref{sec:Convolutions}, we recall a convolution formula given by the authors in \cite{BGW14a} and discuss it in the context of multifold convolutions of linear recurrence sequences. For this type of convolved sequences we give a universal recurrence formula (of the same depth as the original sequence), which is obtained by using properties of the partial Bell polynomials. We conclude the paper with a few examples that illustrate some applications of our main result.

\section{Linear Recurrence Sequences}
\label{sec:LinearRecurrence}

The representation of linear recurrence sequences in terms of partial Bell polynomials, as discussed in the introduction, can be summarized as follows:

\begin{proposition}\label{prop:BellRepresenation}
Let $(a_n)$ be a linear recurrence sequence satisfying
\begin{equation*}
 a_n=c_1 a_{n-1}+c_2a_{n-2}+\cdots+c_da_{n-d} \;\text{ for } n\ge d\ge 1,
\end{equation*}
with initial values $a_0, a_1,\dots, a_{d-1}$. Let $(y_n)$ be defined as in \eqref{eq:canonicalSeq}, and let
\begin{equation*}
\lambda_0=a_0, \quad \lambda_n=a_n-\sum_{j=1}^n c_j a_{n-j} \;\text{ for } n=1,\dots,d-1.
\end{equation*}
Then $a_n= \lambda_0 y_n + \lambda_1 y_{n-1}+\cdots+\lambda_{d-1} y_{n-d+1}$, so
\begin{equation}\label{eq:BellRepresentation}
 a_n = \sum_{k=0}^{d-1} \lambda_k \sum_{j=0}^{n-k} \frac{j!}{(n-k)!} B_{n-k,j}(1!c_1, 2!c_2, \dots)\;\text{ for } n\ge 1. 
\end{equation}
\end{proposition}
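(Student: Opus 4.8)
The plan is to reduce everything to two observations: first, that the sequence $(y_n)$ is itself a linear recurrence sequence with the same coefficients $c_1,\dots,c_d$; second, that the shifted sequences $(y_{n-k})_{n\ge 0}$ (with the convention $y_m=0$ for $m<0$ and $y_0=1$) form a basis for the solution space, so that matching $d$ initial values suffices. The Bell-polynomial formula \eqref{eq:BellRepresentation} then follows by substituting \eqref{eq:canonicalSeq} into the linear combination $a_n=\sum_{k=0}^{d-1}\lambda_k y_{n-k}$.

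First I would verify that $(y_n)$ satisfies $y_n=c_1y_{n-1}+\cdots+c_dy_{n-d}$ for $n\ge d$. This is immediate from $Y(t)=1/Q(t)$: since $c_j=0$ for $j>d$ we have $Q(t)=1-c_1t-\cdots-c_dt^d=q(t)$, and clearing denominators in $q(t)Y(t)=1$ gives, for each $n\ge 1$, the relation $y_n-c_1y_{n-1}-\cdots-c_dy_{n-d}=0$ (reading off the coefficient of $t^n$, with $y_0=1$ and negatively-indexed terms zero). In particular this holds for all $n\ge 1$, not just $n\ge d$, which is slightly stronger than what we need. Next I would observe that each shift $(y_{n-k})_{n\ge0}$, $k=0,\dots,d-1$, satisfies the same recurrence for $n\ge d$ (the recurrence is shift-invariant once the index is large enough), and that these $d$ sequences are linearly independent because the matrix of their first $d$ values is lower-triangular with $1$'s on the diagonal (the value of $(y_{n-k})$ at $n=k$ is $y_0=1$, and it is $0$ for $n<k$).

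Now set $\tilde a_n=\sum_{k=0}^{d-1}\lambda_k y_{n-k}$. By linearity $(\tilde a_n)$ satisfies the recurrence \eqref{eq:recRelation} for $n\ge d$, so it suffices to check $\tilde a_n=a_n$ for $n=0,1,\dots,d-1$. Using the triangular structure, $\tilde a_n=\sum_{k=0}^{n}\lambda_k y_{n-k}$ for $n\le d-1$. I would prove $\tilde a_n=a_n$ by induction on $n$: for $n=0$, $\tilde a_0=\lambda_0=a_0$; for $1\le n\le d-1$, expand $\tilde a_n=\lambda_n+\sum_{k=0}^{n-1}\lambda_k y_{n-k}$ and substitute the recurrence $y_{n-k}=c_1y_{n-k-1}+\cdots$ valid for $n-k\ge1$, i.e. for $k\le n-1$; after reindexing the double sum one recognizes $\sum_{j=1}^{n}c_j\big(\sum_{k=0}^{n-j}\lambda_k y_{n-j-k}\big)=\sum_{j=1}^n c_j\tilde a_{n-j}=\sum_{j=1}^n c_j a_{n-j}$ by the induction hypothesis, so $\tilde a_n=\lambda_n+\sum_{j=1}^n c_j a_{n-j}=a_n$ by the definition of $\lambda_n$. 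Finally, substituting \eqref{eq:canonicalSeq} for each $y_{n-k}$ in $a_n=\sum_{k=0}^{d-1}\lambda_k y_{n-k}$ yields \eqref{eq:BellRepresentation}, where the inner sum automatically truncates since $B_{m,j}=0$ for $j>m$ and $y_0=1$ corresponds to the $j=0$ term.

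The only mildly delicate point is bookkeeping the index ranges: making sure the recurrence $y_m=c_1y_{m-1}+\cdots+c_dy_{m-d}$ is applied only for $m\ge1$ (where it genuinely holds, with the zero-padding convention), keeping the sums $\sum_{k=0}^{d-1}\lambda_ky_{n-k}$ and $\sum_{k=0}^n\lambda_k y_{n-k}$ straight for small $n$, and confirming that the reindexing in the induction step lines up exactly. None of this is hard, but it is where an off-by-one error would hide; everything else is a direct consequence of $Y(t)=1/q(t)$ together with the Faà di Bruno identity \eqref{eq:canonicalSeq} already recorded in the introduction.
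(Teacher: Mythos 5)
Your proposal is correct and follows essentially the same route as the paper: both arguments rest on the facts that $(y_n)$ and its shifts satisfy the recurrence and form a triangular basis, so it suffices to match the first $d$ values. The only cosmetic difference is that the paper solves the resulting lower-triangular system by exhibiting its inverse (the triangular matrix with entries $-c_j$), whereas you verify the stated $\lambda_k$ directly by forward substitution/induction, which is the same computation.
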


\begin{proof}
If $S$ denotes the right-shift operator $S(a_1,a_2,\dots)=(0,a_1,a_2,\dots)$, then the sequences $(y_n), S(y_n), S^2(y_n),\dots, S^{d-1}(y_n)$, clearly form a basis for the space of all linear recurrence sequences with coefficients $c_1,\dots,c_d$. Thus there are constants $\lambda_0, \lambda_1,\dots, \lambda_{d-1}$ such that $a_n= \lambda_0 y_n + \lambda_1 y_{n-1}+\cdots+\lambda_{d-1} y_{n-d+1}$, with the convention that $y_k=0$ if $k<0$. To find the $\lambda_k$'s, we just need to look at the initial values and solve the equation
\begin{equation*}
\begin{pmatrix}
 1 & 0 &\cdots&\cdots&\cdots& 0 \\
 y_1 & 1 & 0 &\cdots&\cdots& 0 \\
 y_2 & y_1 & 1 & 0 & \cdots & 0 \\
 \vdots & \vdots &&\ddots&& \vdots \\
 \vdots & \vdots &&&\ddots& \vdots \\
 y_{d-1} & y_{d-2} & \cdots & \cdots & y_1 & 1 
 \end{pmatrix}
\begin{pmatrix}
 \lambda_0 \\ \lambda_1 \\ \lambda_2 \\ \vdots \\ \vdots \\ \lambda_{d-1} 
\end{pmatrix}
 =
\begin{pmatrix}
 a_0 \\ a_1 \\ a_2 \\ \vdots \\ \vdots \\ a_{d-1} 
\end{pmatrix}.
\end{equation*}
By definition, $(y_n)$ satisfies the same recurrence as $(a_n)$, so $y_n=\sum_{j=1}^n c_j y_{n-j}$. Thus the inverse of the above $d\times d$ matrix is 
\begin{equation*}
\begin{pmatrix}
 1 & 0 &\cdots&\cdots& 0 \\
 -c_1 & 1 & 0 &\cdots& 0 \\
 \vdots & \vdots &\ddots&& \vdots \\
 \vdots & \vdots &&\ddots& \vdots \\
 -c_{d-1} & -c_{d-2} & \cdots & -c_1 & 1 
 \end{pmatrix},
\end{equation*}
and the claimed formula follows by applying this matrix to the vector $(a_0,\dots,a_{d-1})$.
\end{proof}

\begin{remark}
It is worth mentioning, that the representation \eqref{eq:BellRepresentation} provides a unifying approach to linear recurrence sequences in which the coefficients of the recurrence are separated and organized inside the partial Bell polynomials. In many cases, this gives known and new combinatorial identities for the sequence at hand (regardless of the order of recursion) as well as for their repeated convolutions, see Section~\ref{sec:Convolutions}.  
\end{remark}

For illustration purposes, let us consider a few basic examples.
\begin{example}(Generalized Fibonacci) \label{ex:genFibonacci}
For arbitrary $\alpha$ and coefficients $c_1$ and $c_2$, let $(f_n)$ be the sequence defined by
\begin{gather*}
  f_0=0,\;\; f_1=\alpha, \\
  f_n=c_1 f_{n-1}+c_2 f_{n-2} \;\text{ for } n\ge 2.
\end{gather*}
In the terminology of Proposition~\ref{prop:BellRepresenation}, we then have $\lambda_0=0$, $\lambda_1=\alpha$, and for $n\ge 1$ we get 
\begin{equation*}
f_n=\alpha y_{n-1} = \alpha \sum_{k=0}^{n-1} \frac{k!}{(n-1)!} B_{n-1,k}(1!c_1, 2!c_2, 0, \dots),
\end{equation*}
and since $B_{n-1,k}(c_1,2c_2,0,\dots)=\frac{(n-1)!}{k!} \binom{k}{n-1-k} c_1^{2k-n+1}c_2^{n-1-k}$, we arrive at
\begin{equation}
 f_n = \alpha \sum_{k=0}^{n-1} \binom{k}{n-1-k} c_1^{2k-n+1}c_2^{n-1-k}
 = \alpha \sum_{j=0}^{n-1} \binom{n-1-j}{j}c_1^{n-1-2j} c_2^j.
\end{equation}
\end{example}
\begin{example}(Padovan, A000931 in \cite{Sloane}) \label{ex:Padovan}
Consider the sequence defined by
\begin{gather*}
  P_0=1, \;\; P_1=P_2=0, \\
  P_n=P_{n-2}+P_{n-3} \;\text{ for } n\ge 3.
\end{gather*}
Using Proposition~\ref{prop:BellRepresenation} with $c_1=0$ and $c_2=c_3=1$, we get $\lambda_0=1$, $\lambda_1=0$, $\lambda_2=-1$, and for $n\ge 3$,
\[ P_n = y_{n}-y_{n-2}=y_{n-3}=\sum_{k=0}^{n-3} \frac{k!}{(n-3)!} B_{n-3,k}(0,2!,3!,0 \dots). \] 
Now, since $\frac{k!}{(n-3)!} B_{n-3,k}(0,2!,3!,0 \dots)=\binom{k}{n-3-2k}$, we conclude
\begin{equation}
P_n = \sum_{k=0}^{n-3} \binom{k}{n-3-2k} \;\text{ for } n\ge 3.
\end{equation}
\end{example}
\begin{example}(Tribonacci, A000073 in \cite{Sloane}) \label{ex:Tribonacci}
Let $(t_n)$ be the sequence defined by
\begin{gather*}
  t_0=t_1=0,\;\; t_2=1, \\
  t_n=t_{n-1}+t_{n-2}+t_{n-3} \;\text{ for } n\ge 3.
\end{gather*}
By Proposition~\ref{prop:BellRepresenation}, we have $t_n=y_{n-2}$, so
\begin{equation*}
 t_n= \sum_{k=0}^{n-2} \frac{k!}{(n-2)!} B_{n-2,k}(1!,2!,3!,0,\dots),
\end{equation*}
and since $B_{n,k}(1!,2!,3!,0,\dots)=\frac{n!}{k!}\sum_{\ell=0}^k \binom{k}{k-\ell} \binom{k-\ell}{n+\ell-2k}$, we get
\begin{equation*}
 t_n =\sum_{k=0}^{n-2} \sum_{\ell=0}^k \binom{k}{k-\ell} \binom{k-\ell}{n-2+\ell-2k}
 =\sum_{k=0}^{n-2} \sum_{\ell=0}^k \binom{k}{\ell} \binom{\ell}{n-2-k-\ell}.
\end{equation*}
With the change of variable $\ell=j-k$, and changing the order of summation, we arrive at
\begin{equation}
 t_n = \sum_{j=0}^{n-2} \sum_{k=0}^{j} \binom{k}{j-k} \binom{j-k}{n-2-j} \;\text{ for } n\ge 2.
\end{equation}
\end{example}

\begin{example}(Chebyshev) 
We now consider the Chebyshev polynomials defined by
\begin{gather*}
 T_0(x)=1, \;\; T_1(x)=x, \\
 T_n(x)=2xT_{n-1}(x)-T_{n-2}(x) \;\text{ for } n\ge 2.
\end{gather*}
Here $c_1=2x$, $c_2=-1$, so $\lambda_0=1$, $\lambda_1=-x$, and for $n\ge 1$ we get
\begin{align*}
T_n(x) &= y_{n}-xy_{n-1} \\
&= \sum_{j=0}^{n} \frac{j!}{n!} B_{n,j}(2x, -2, 0, \dotsc) -x\sum_{j=0}^{n-1} \frac{j!}{(n-1)!} B_{n-1,j}(2x, -2, 0, \dotsc) \\
&= \sum_{j=0}^{n} \binom{j}{n-j}(2x)^{2j-n}(-1)^{n-j}-x\sum_{j=0}^{n-1} \binom{j}{n-1-j}(2x)^{2j-n+1}(-1)^{n-1-j} \\
&= \sum_{j=0}^{n} \binom{j}{n-j}(2x)^{2j-n}(-1)^{n-j}-\frac12\sum_{j=1}^{n} \binom{j-1}{n-j}(2x)^{2j-n}(-1)^{n-j},
\end{align*}
which  can be written as
\begin{equation}\label{eq:Chebyshev1}
T_n(x)= \sum_{k=0}^{\lfloor\frac{n}{2}\rfloor} (-1)^{k}\frac{n}{2(n-k)}\binom{n-k}{k}(2x)^{n-2k} \;\text{ for } n\ge 1.
\end{equation}

Similarly, for the Chebyshev polynomials of second kind $U_n(x)$, defined by the same recurrence relation as for $T_n(x)$, but with initial values $U_0(x)=1$ and $U_1(x)=2x$, we get
\begin{equation}\label{eq:Chebyshev2}
U_n(x) = y_n = \sum_{k=0}^{\lfloor\frac{n}{2}\rfloor} (-1)^{k}\binom{n-k}{k}(2x)^{n-2k} \;\text{ for } n\ge 1.
\end{equation}
\end{example}

\subsection*{Power Sums}
We finish this section by considering the power sum symmetric functions 
\begin{equation*}
 s_n=x_1^n+\cdots+x_d^n
\end{equation*}
with $d$ variables. The sequence $(s_n)$ satisfies the relations (Newton's identities):
\begin{align*}
 s_0&=d \\
 s_1&=e_1 \\
 s_2&=e_1s_1-2e_2 \\
 s_3&=e_1s_2-e_2s_1+3e_3 \\
 &\;\; \vdots \\
 s_n&=e_1s_{n-1}-e_2 s_{n-2}+\cdots+(-1)^{d-1}e_d s_{n-d} \;\text{ for } n\ge d,
\end{align*}
where $e_1,\dots,e_d$ are the elementary symmetric functions in $x_1,\dots,x_d$. In other words, $(s_n)$ is a linear recurrence sequence of length $d$ with initial values $s_0,s_1,\dots,s_{d-1}$. Thus, as a consequence of Proposition~\ref{prop:BellRepresenation}, each power sum $s_n$ can be expressed in terms of partial Bell polynomials in $e_1,\dots,e_d$. This representation is an efficient way to organize and prove the Girard-Waring formulas (see e.g. \cite{Gould97}).

\begin{proposition}\label{prop:PowerSums}
Let $s_n=x_1^n+\cdots+x_d^n$ and let $e_1,\dots,e_d$ be the elementary symmetric functions in the variables $x_1,\dots,x_d$. Then, for $n\ge 1$, we have
\begin{align*}
 s_n &= \sum_{k=1}^n (-1)^{n+k} \frac{(k-1)!}{(n-1)!} B_{n,k}(1!e_1,2!e_2,\dots,d!e_d,0,\dots) .
\end{align*}
\end{proposition}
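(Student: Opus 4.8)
The plan is to avoid the matrix inversion of Proposition~\ref{prop:BellRepresenation} altogether and argue directly with generating functions, using the observation that $\log Y(t)$ admits an expansion that is a logarithmic companion of \eqref{eq:canonicalSeq}. Set $q(t)=\prod_{i=1}^{d}(1-x_it)=1-\sum_{j=1}^{d}(-1)^{j-1}e_jt^j$; this is precisely the series $Q(t)$ associated with the coefficients $c_j=(-1)^{j-1}e_j$ of Newton's recurrence, and $Y(t)=1/q(t)=1+\sum_{n\ge1}y_nt^n$ with $(y_n)$ given by \eqref{eq:canonicalSeq}. The starting point is the classical identity
\[
\sum_{n\ge1}s_nt^n=\sum_{i=1}^{d}\frac{x_it}{1-x_it}=-t\,\frac{q'(t)}{q(t)}=t\,\frac{d}{dt}\log Y(t),
\]
which follows by logarithmically differentiating $q(t)=\prod_i(1-x_it)$ and multiplying by $-t$.

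Next I would expand
\[
\log Y(t)=-\log\Bigl(1-\sum_{n\ge1}c_nt^n\Bigr)=\sum_{k\ge1}\frac1k\Bigl(\sum_{n\ge1}c_nt^n\Bigr)^{k},
\]
and, writing $\sum_{m\ge1}c_mt^m=\sum_{m\ge1}(m!c_m)\frac{t^m}{m!}$, apply the defining generating function $\frac1{k!}\bigl(\sum_{m\ge1}(m!c_m)\frac{t^m}{m!}\bigr)^k=\sum_{n\ge k}B_{n,k}(1!c_1,2!c_2,\dots)\frac{t^n}{n!}$ of the partial Bell polynomials. This yields the logarithmic analogue of \eqref{eq:canonicalSeq},
\[
\log Y(t)=\sum_{n\ge1}\Bigl(\sum_{k=1}^{n}(k-1)!\,B_{n,k}(1!c_1,2!c_2,\dots)\Bigr)\frac{t^n}{n!},
\]
and, after applying $t\,\frac{d}{dt}$ and comparing coefficients with the first identity,
\[
s_n=\frac1{(n-1)!}\sum_{k=1}^{n}(k-1)!\,B_{n,k}(1!c_1,2!c_2,\dots)\qquad(n\ge1).
\]

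The last step is to substitute $c_j=(-1)^{j-1}e_j$, so that the arguments become $j!c_j=(-1)^{j-1}(j!e_j)$, and to invoke the homogeneity relation $B_{n,k}(ab\,x_1,ab^2x_2,ab^3x_3,\dots)=a^kb^nB_{n,k}(x_1,x_2,\dots)$ with $a=b=-1$ and $x_j=j!e_j$ (noting $ab^j=(-1)^{j-1}$), which gives $B_{n,k}(1!c_1,2!c_2,\dots)=(-1)^{n+k}B_{n,k}(1!e_1,2!e_2,\dots,d!e_d,0,\dots)$, the arguments past index $d$ vanishing because $e_j=0$ for $j>d$. Absorbing the sign $(-1)^{n+k}$ into the sum produces the claimed formula.

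No step here is genuinely hard; the places that demand care are the bookkeeping in the opening identity — getting the factor $-t$ and the exponent shift correct when differentiating $\log Y(t)$ — and the sign accounting in the homogeneity step, in particular recognizing $(-1)^{j-1}$ as $ab^j$ with $a=b=-1$ rather than misreading it as $(-1)^j$. One can alternatively stay within Proposition~\ref{prop:BellRepresenation} proper by reading off $\lambda_0=d$ and $\lambda_m=(-1)^{m-1}(m-d)e_m$ for $1\le m\le d-1$ from Newton's identities, but then collapsing the resulting double sum $d\,y_n+\sum_{m}\lambda_my_{n-m}$ back to a single Bell polynomial sum is more laborious than the generating-function computation above.
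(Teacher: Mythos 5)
Your argument is correct, but it follows a genuinely different route from the paper. You work entirely on the generating-function side: the identity $\sum_{n\ge1}s_nt^n=-t\,q'(t)/q(t)=t\,\frac{d}{dt}\log Y(t)$, combined with the exponential generating function $\frac1{k!}\bigl(\sum_{m\ge1}x_m\frac{t^m}{m!}\bigr)^k=\sum_{n\ge k}B_{n,k}(x_1,x_2,\dots)\frac{t^n}{n!}$, gives the expansion of $\log Y(t)$ (essentially Comtet's logarithmic polynomials), and comparing coefficients yields $s_n=\sum_{k=1}^n\frac{(k-1)!}{(n-1)!}B_{n,k}(1!c_1,2!c_2,\dots)$ in one step; your sign bookkeeping is also right, since $ab^{j}=(-1)^{j-1}$ for $a=b=-1$ gives the factor $(-1)^{n+k}$. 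The paper instead stays inside its recurrence machinery: it uses Proposition~\ref{prop:BellRepresenation} together with Newton's identities to read off $\lambda_0=d$ and $\lambda_j=(j-d)c_j$, uses the recurrence $y_n=\sum_j c_jy_{n-j}$ to collapse $s_n=\sum_{j=0}^{d-1}\lambda_jy_{n-j}$ into $s_n=\sum_{j=1}^{d}jc_jy_{n-j}$, and then applies Lemma~\ref{lem:keyIdentity}, namely $nB_{n,k}=\sum_{j}j\binom{n}{j}x_jB_{n-j,k-1}$, to turn the resulting double sum into a single Bell-polynomial sum. Your proof is shorter and bypasses both Newton's identities and Lemma~\ref{lem:keyIdentity} (you need only the Vieta expansion of $\prod_i(1-x_it)$ and the formal logarithm, which is harmless here since the computation lives in $\mathbb{Q}[x_1,\dots,x_d][[t]]$ and the final identity has integer coefficients); what the paper's proof buys is a demonstration of how the representation \eqref{eq:BellRepresentation} and the key lemma operate purely at the level of recurrences, the same technique it then exploits for the convolution results of Section~\ref{sec:Convolutions}. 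Incidentally, the alternative you dismiss as laborious --- reading off $\lambda_m=(-1)^{m-1}(m-d)e_m$ and collapsing the double sum --- is exactly the paper's computation, with Lemma~\ref{lem:keyIdentity} doing the collapsing cleanly.
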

\medskip
Our proof relies on the following basic recursive formula.
\begin{lemma} \label{lem:keyIdentity}
For any sequence $x=(x_1,x_2,\dots)$, we have
  \begin{equation*}
   nB_{n, k}(x) = \sum_{j=1}^{n-k+1} j \binom{n}{j} x_j B_{n-j, k-1}(x).
  \end{equation*}
\end{lemma}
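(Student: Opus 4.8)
The plan is to prove Lemma~\ref{lem:keyIdentity} by exploiting the exponential generating function definition of the partial Bell polynomials, namely
\begin{equation*}
 \sum_{n\ge k} B_{n,k}(x_1,x_2,\dots)\frac{t^n}{n!} = \frac{1}{k!}\left(\sum_{j\ge 1} x_j\frac{t^j}{j!}\right)^{\!k}.
\end{equation*}
Write $\Phi(t)=\sum_{j\ge 1} x_j t^j/j!$, so that the left-hand side is $\Phi(t)^k/k!$. Differentiating both sides with respect to $t$ produces $\Phi(t)^{k-1}\Phi'(t)/(k-1)!$ on the right. The idea is that $\Phi'(t)=\sum_{j\ge 1} x_j t^{j-1}/(j-1)! = \sum_{j\ge 1} j\,x_j t^{j-1}/j!$, so the right-hand side becomes a product of the two generating series $\bigl(\Phi(t)^{k-1}/(k-1)!\bigr)\cdot\Phi'(t)$, which is exactly the generating function of the convolution of $\bigl(B_{n-1,k-1}(x)\bigr)$ against the coefficient sequence of $\Phi'$.

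The key steps, in order, are: first, record the EGF identity and set $\Phi(t)=\sum_{j\ge1} x_j t^j/j!$; second, differentiate $\sum_{n\ge k} B_{n,k}(x)t^n/n! = \Phi(t)^k/k!$ to get $\sum_{n\ge k} B_{n,k}(x)t^{n-1}/(n-1)! = \Phi(t)^{k-1}\Phi'(t)/(k-1)!$; third, recognize $\Phi(t)^{k-1}/(k-1)! = \sum_{m\ge k-1} B_{m,k-1}(x)t^m/m!$ and $\Phi'(t) = \sum_{j\ge1} j\,x_j t^{j-1}/j!$, multiply the two power series, and extract the coefficient of $t^{n-1}$. Matching the coefficient of $t^{n-1}/(n-1)!$ on the left with the Cauchy product on the right, and using the binomial identity $\binom{n-1}{j-1} = \frac{j}{n}\binom{n}{j}$ to rewrite the resulting products of factorials, yields $n B_{n,k}(x) = \sum_{j\ge1} j\binom{n}{j} x_j B_{n-j,k-1}(x)$; the range of $j$ is $1\le j\le n-k+1$ because $B_{n-j,k-1}(x)$ vanishes unless $n-j\ge k-1$.

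I expect the main obstacle to be purely bookkeeping: carefully tracking the factorials and the index shift when passing from the derivative relation to the coefficient extraction, in particular justifying the shift $m+j = n$ in the Cauchy product and verifying that the combinatorial weight works out to $j\binom{n}{j}$ rather than, say, $\binom{n-1}{j-1}$ or $n\binom{n-1}{j-1}$. There are no conceptual difficulties — the identity is a formal consequence of differentiating a generating function — so the proof should be short once the normalization is pinned down. An entirely equivalent route, which avoids generating functions, is to argue directly from the combinatorial definition $B_{n,k}(x) = \sum \frac{n!}{\prod i_\ell!\, (\ell!)^{i_\ell}}\prod x_\ell^{i_\ell}$ over partitions of $n$ into $k$ parts: classify such partitions by the size $j$ of the part containing the element $1$ (or, in the multiset-of-blocks picture, by the size of one distinguished block), which removes $j$ elements and leaves a partition counted by $B_{n-j,k-1}$, with $\binom{n}{j}$ ways to choose that block and a factor $j$ accounting for the multiplicity; I would present the generating-function proof as the primary one since it is cleaner.
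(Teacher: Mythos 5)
Your proof is correct. The generating-function computation works exactly as you describe: differentiating $\sum_{n\ge k}B_{n,k}(x)t^n/n!=\Phi(t)^k/k!$ gives $\sum_{n\ge k}B_{n,k}(x)\tfrac{t^{n-1}}{(n-1)!}=\tfrac{1}{(k-1)!}\Phi(t)^{k-1}\Phi'(t)$, and extracting the coefficient of $t^{n-1}$ from the Cauchy product yields $\tfrac{B_{n,k}}{(n-1)!}=\sum_{j\ge1}\tfrac{j\,x_j}{j!}\tfrac{B_{n-j,k-1}}{(n-j)!}$, which after multiplying by $n!$ is the claimed identity, with the range $1\le j\le n-k+1$ justified by the vanishing of $B_{n-j,k-1}$ for $n-j<k-1$. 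The paper takes a shorter route: it simply quotes the known recurrence $B_{n,k}=\sum_{j=1}^{n-k+1}\binom{n-1}{j-1}x_jB_{n-j,k-1}$ from Charalambides and then multiplies by $n$, using $n\binom{n-1}{j-1}=j\binom{n}{j}$ --- the same binomial manipulation you perform at the end. So the difference is that you re-derive the cited recurrence from the exponential generating function (which is in essence how that identity is usually proved), making your argument self-contained at the cost of a little more bookkeeping, whereas the paper trades self-containment for brevity by citing the reference. Your alternative combinatorial sketch (classifying the partition blocks by the size $j$ of a distinguished block) is also a valid way to see the same recurrence, though as stated it would need the multiplicity bookkeeping made precise; the EGF version you chose as primary avoids that issue entirely.
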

\begin{proof}
This is a consequence of the known identity
\begin{equation*}
  B_{n,k}=\sum_{j=0}^{n-k}\binom{n-1}{j}x_{j+1}B_{n-1-j,k-1}=\sum_{j=1}^{n-k+1}\binom{n-1}{j-1}x_{j}B_{n-j,k-1},
\end{equation*}
see equation (11.11) on p.~415 in \cite{Charalambides}. Therefore,
\begin{equation*}
 n B_{n,k} = \sum_{j=1}^{n-k+1} n\binom{n-1}{j-1} x_{j}B_{n-j,k-1} 
 = \sum_{j=1}^{n-k+1} j\binom{n}{j} x_{j}B_{n-j,k-1} 
\end{equation*}
as claimed.
\end{proof}
\begin{proof}[{\bf Proof of Proposition~\ref{prop:PowerSums}}]
Let $s_n= x_1^n+\cdots+x_d^n$. As mentioned before, $(s_n)$ is a linear recurrence sequence with coefficients \begin{equation*}
 c_j = (-1)^{j-1} e_j \; \text{ for } j=1,\dots, d,
\end{equation*}  
and initial values 
\begin{equation*}
 s_0=d, \quad s_k=\sum_{j=1}^{k-1} c_j s_{k-j} + kc_k \;\text{ for } k=1,\dots,d-1.
\end{equation*}
By Proposition~\ref{prop:BellRepresenation}, we can write $s_n=\sum_{j=0}^{d-1} \lambda_j y_{n-j}$ with $(y_n)$ defined as in \eqref{eq:canonicalSeq} and the $\lambda_j$'s given by $\lambda_0=s_0=d$ and
\begin{equation*}
 \lambda_j=s_j-\sum_{i=1}^j c_i s_{j-i} = \bigg(\sum_{i=1}^{j-1} c_i s_{j-i} + jc_j\bigg)-\sum_{i=1}^j c_i s_{j-i}
 = (j-d) c_j 
\end{equation*}
for $j=1,\dots,d-1$. Also, recall that $(y_n)$ is designed to satisfy the same recurrence relation as $(s_n)$ for $n\ge d$. Therefore, $y_n=\sum_{j=1}^{d} c_jy_{n-j}$ and we can rewrite $s_n$ as follows
\begin{align*}
 s_n&=\sum_{j=0}^{d-1} \lambda_j y_{n-j} \\
 &=d\bigg(\sum_{j=1}^{d} c_jy_{n-j}\bigg) + \sum_{j=1}^{d-1} (j-d)c_j y_{n-j} 
 \quad\quad \big[\lambda_j=(j-d)c_j\big]\\
 &=dc_d y_{n-d} +  \sum_{j=1}^{d-1} jc_j y_{n-j} = \sum_{j=1}^{d} jc_j y_{n-j} \\
 &= \sum_{j=1}^{d} \sum_{k=0}^{n-j}jc_j \frac{k!}{(n-j)!} B_{n-j,k}(1!c_1, 2!c_2, \dots, d!c_d,0,\dots).
\end{align*}
Since $c_j=0$ for $j>d$, we can write the sum over $j$ up to $n$ to facilitate a change of summation. Then 
\begin{align*}
 s_n&= \sum_{j=1}^{n} \sum_{k=0}^{n-j}jc_j \frac{k!}{(n-j)!} B_{n-j,k}(1!c_1, 2!c_2, \dots) \\
 &= \sum_{k=0}^{n-1} \sum_{j=1}^{n-k}jc_j \frac{k!}{(n-j)!} B_{n-j,k}(1!c_1, 2!c_2, \dots) \\
 &= \sum_{k=0}^{n-1} \frac{k!}{n!} \bigg[\sum_{j=1}^{n-k}j\binom{n}{j} (j!c_j) B_{n-j,k}(1!c_1, 2!c_2, \dots)\bigg]
\end{align*}
which by Lemma~\ref{lem:keyIdentity} yields
\begin{equation*}
 s_n =\sum_{k=0}^{n-1} \frac{k!}{n!}\, nB_{n,k+1}(1!c_1, 2!c_2, \dots) = 
 \sum_{k=1}^{n} \frac{(k-1)!}{(n-1)!} B_{n,k}(1!c_1, 2!c_2, \dots).
\end{equation*}
The claimed identity for the power sum $s_n$ follows by replacing back $c_j=(-1)^{j-1} e_j$ and using the homogeneity properties of the polynomial $B_{n,k}$.
\end{proof}

\section{Convolutions}
\label{sec:Convolutions}

We now turn our attention to convolutions of sequences of the form  
\begin{equation}\label{eq:canonicalSeq2}
 y_n = \sum_{k=0}^n \frac{k!}{n!} B_{n,k}(1!c_1, 2!c_2, 3!c_3, \dots).
\end{equation}
In \cite{BGW14a}, we considered a more general family of sequences and proved the following result: 
\begin{theorem}[{\cite[Thm.~2.1]{BGW14a}}] 
Let $a$ and $b$ be arbitrary numbers. Let $y_0=1$ and 
\begin{equation*}
 y_n = \sum_{k=1}^n \binom{a n+b k}{k-1}\frac{(k-1)!}{n!} B_{n,k}(1!c_1, 2!c_2, \dots)
 \;\text{ for $n\ge 1$.}
\end{equation*}
For $r\in\mathbb{N}$, we have
\begin{equation*}
\sum_{m_1+\dots+m_r=n} \!\! y_{m_1}\cdots y_{m_r} 
 = r\sum_{k=1}^{n} \binom{a n+b k+r-1}{k-1} \frac{(k-1)!}{n!} B_{n,k}(1!c_1, 2!c_2, \dots).
\end{equation*}
\end{theorem}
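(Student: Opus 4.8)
The plan is to identify the ordinary generating function $Y(t)=\sum_{n\ge0}y_nt^n$ as the solution of an explicit functional equation and then extract the coefficients of every power $Y(t)^r$ by Lagrange inversion; since $\sum_{m_1+\dots+m_r=n}y_{m_1}\cdots y_{m_r}=[t^n]Y(t)^r$, this gives the statement. Observe that the $r=1$ case of the target formula is just the definition of $y_n$, so it suffices to establish the displayed expression for $[t^n]Y(t)^r$ for \emph{all} $r\ge1$: taking $r=1$ then reconfirms $Y(t)=\sum_n y_nt^n$, and the convolution formula follows. All computations below take place in a ring of formal power series.

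Write $\Phi(t)=\sum_{n\ge1}c_nt^n$, so that Fa{\`a} di Bruno's formula gives $[t^n]\Phi(t)^k=\frac{k!}{n!}B_{n,k}(1!c_1,2!c_2,\dots)$. First I would check that $Y=1+\Phi(tY^{a})\,Y^{b}$ has a unique formal power series solution with $Y(0)=1$ (the right-hand side is a $t$-adic contraction because $\Phi(tY^{a})=O(t)$), and then reparametrize it: let $P=P(\sigma)$ be the unique solution with $P(0)=1$ of $P=1+\Phi(\sigma)\,P^{b}$, let $\sigma=\sigma(t)$ be the unique solution with $\sigma(0)=0$ of $\sigma=t\,P(\sigma)^{a}$, and set $Y(t)=P(\sigma(t))$; then $Y$ solves the functional equation because $\Phi(tY^{a})=\Phi\bigl(t\,P(\sigma)^{a}\bigr)=\Phi(\sigma)$. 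Since $P,Y\in 1+t\mathbb{Q}[[t]]$, the powers $P(\sigma)^{M}$ and $Y(t)^{M}$ are well-defined formal power series for arbitrary exponents $M$.

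The heart of the argument is a two-step Lagrange computation in which both substitutions have simple kernels. Because $\sigma=t\,P(\sigma)^{a}$, Lagrange inversion applied to $H(u)=P(u)^{r}$, combined with the rewriting $rP^{r-1}P'\!\cdot P^{an}=\tfrac{r}{an+r}\bigl(P^{an+r}\bigr)'$ and the rule $[\tau^{n-1}]f'=n\,[\tau^{n}]f$, yields
\[
 [t^{n}]Y(t)^{r}=\frac{r}{an+r}\,[\tau^{n}]P(\tau)^{an+r}.
\]
To evaluate $[\tau^{n}]P(\tau)^{M}$, note that the defining relation, written as $P-1=\Phi(\tau)\,P^{b}$, is already in Lagrange--B\"urmann form $P-1=\Phi(\tau)\,\psi(P)$ with $\psi(P)=P^{b}$ and $\psi(1)=1\neq0$, so applying the Lagrange--B\"urmann formula to $g(P)=P^{M}$ about $P=1$ gives
\[
 P(\tau)^{M}=1+\sum_{k\ge1}\frac{\Phi(\tau)^{k}}{k!}\Bigl[\tfrac{d^{\,k-1}}{dP^{\,k-1}}\bigl(M\,P^{\,M-1+bk}\bigr)\Bigr]_{P=1},
\]
because $g'(P)\,\psi(P)^{k}=M\,P^{M-1+bk}$. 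The $(k-1)$-st derivative at $P=1$ collapses to $M\,(M-1+bk)(M-2+bk)\cdots(M-k+1+bk)=M\,(k-1)!\binom{M-1+bk}{k-1}$, so $P(\tau)^{M}=1+\sum_{k\ge1}\tfrac{M}{k}\binom{M-1+bk}{k-1}\Phi(\tau)^{k}$; extracting coefficients via $[\tau^{n}]\Phi(\tau)^{k}=\frac{k!}{n!}B_{n,k}(1!c_1,2!c_2,\dots)$ then gives $[\tau^{n}]P(\tau)^{M}=M\sum_{k=1}^{n}\binom{M-1+bk}{k-1}\frac{(k-1)!}{n!}B_{n,k}$. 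Substituting $M=an+r$ (the leading factor $M$ cancels the $1/M$ implicit in $\frac{r}{an+r}$, so what remains is a polynomial identity in which the degenerate case $an+r=0$ causes no difficulty) produces exactly $[t^{n}]Y(t)^{r}=r\sum_{k=1}^{n}\binom{an+bk+r-1}{k-1}\frac{(k-1)!}{n!}B_{n,k}$; for $r=1$ this reads $[t^{n}]Y(t)=y_{n}$.

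I expect the main obstacle to be twofold. First, \emph{finding} the functional equation $Y=1+\Phi(tY^{a})Y^{b}$ together with the reparametrization that turns it into two Lagrange inversions with kernels $P^{a}$ and $P^{b}$; useful sanity checks are $\Phi(t)=t$ (which should reproduce the Fuss--Catalan series $\sum_{n\ge0}\frac{1}{(a+b-1)n+1}\binom{(a+b)n}{n}t^{n}$) and the special cases $a=b=0$ and $a=0,\,b=1$ (which should give $1+\Phi$ and $(1-\Phi)^{-1}$, the INVERT transform of $(c_n)$). Second, carrying out the inner Lagrange--B\"urmann step cleanly: the payoff is entirely in the identity $g'(P)\psi(P)^{k}=M\,P^{M-1+bk}$, which makes the $(k-1)$-st derivative collapse to a single product of $k-1$ linear factors, and one must read every manipulation as an identity of formal power series, since $a$, $b$ and the exponents $an+r$ are arbitrary numbers, not positive integers. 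A purely elementary route is also available---expand the $r$-fold convolution, apply the classical identity $\sum_{m_1+\dots+m_j=n}\binom{n}{m_1,\dots,m_j}\prod_iB_{m_i,k_i}=\binom{k_1+\dots+k_j}{k_1,\dots,k_j}B_{n,\,k_1+\dots+k_j}$, reduce to $r=2$ by induction, and finish with the Hagen--Rothe convolution identity---but the factors $\binom{am_i+bk_i}{k_i-1}$, whose upper arguments depend on the summation indices $m_i$, make that route markedly more cumbersome, so I would favor the Lagrange argument sketched above.
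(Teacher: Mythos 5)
Your plan is correct: the two-step Lagrange computation checks out (the inner Lagrange--B\"urmann step gives $P(\tau)^{M}=1+\sum_{k\ge1}\tfrac{M}{k}\binom{M+bk-1}{k-1}\Phi(\tau)^{k}$, the outer inversion gives $[t^{n}]Y^{r}=\tfrac{r}{an+r}[\tau^{n}]P(\tau)^{an+r}$, and the two combine to the stated formula, with the $r=1$ case confirming that $Y$ generates the given $(y_n)$), and your parenthetical remark about $an+r=0$ is the right way to dispose of the only division that occurs: since both sides are polynomials in $a,b,c_1,\dots,c_n$ for fixed $n,r$, it suffices to argue with $a$ an indeterminate. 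However, this is a genuinely different route from the one behind the paper. The paper does not reprove the theorem; it quotes it from \cite{BGW14a} and states that the proof there rests on a convolution identity for partial Bell polynomials established in \cite{BGW12} --- essentially the ``elementary route'' you mention and set aside: expand the $r$-fold convolution, use the multinomial convolution law for $B_{n,k}$, and absorb the binomial factors $\binom{am_i+bk_i}{k_i-1}$ by a Vandermonde/Hagen--Rothe-type summation, which is exactly the content of the identity in \cite{BGW12}. The trade-off is as you predict: the Bell-polynomial route stays entirely inside finite polynomial identities (no functional equations, no formal fractional powers, no case $an+r=0$ to excuse) and produces, as a by-product, the reusable convolution lemma that the authors exploit elsewhere; your generating-function route is shorter once the functional equation $Y=1+\Phi(tY^{a})Y^{b}$ is found, explains structurally why the parameters enter as $an+bk+r-1$ (the outer kernel shifts $a n$ by $r$), and immediately exhibits the Fuss--Catalan and INVERT specializations, at the cost of working with arbitrary real exponents in formal power series and of the extra polynomiality argument for the degenerate exponent.
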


\medskip
The proof of this theorem relies on a convolution formula for partial Bell polynomials given by the authors in \cite{BGW12}. In particular, the special case when $a=0$ and $b=1$ can be formulated as follows.

\begin{corollary}\label{cor:r_convolution}
For $(y_n)$ defined by \eqref{eq:canonicalSeq2} and $r\in\mathbb{N}$, we have
\begin{align}\label{eq:r_convolution}
y_n^{(r)}= \sum_{m_1+\dots+m_r=n} \!\! y_{m_1}\cdots y_{m_r} 
 &=r\sum_{k=1}^{n} \binom{k+r-1}{k-1} \frac{(k-1)!}{n!} B_{n,k}(1!c_1, 2!c_2, \dots) \\ \notag
 &=\sum_{k=1}^{n} \binom{k+r-1}{k} \frac{k!}{n!} B_{n,k}(1!c_1, 2!c_2, \dots).
\end{align}
\end{corollary}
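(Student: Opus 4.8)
The plan is to obtain Corollary~\ref{cor:r_convolution} as a direct specialization of the cited Theorem~2.1 from \cite{BGW14a}, simply setting the free parameters to $a=0$ and $b=1$. With those choices, the defining sum for $(y_n)$ in the theorem becomes $y_n=\sum_{k=1}^n\binom{k}{k-1}\frac{(k-1)!}{n!}B_{n,k}(1!c_1,2!c_2,\dots)$, and since $\binom{k}{k-1}=k$ and $k\cdot\frac{(k-1)!}{n!}=\frac{k!}{n!}$, this collapses exactly to \eqref{eq:canonicalSeq2}. So the first step is to verify that the $(y_n)$ of the corollary is the $a=0$, $b=1$ instance of the $(y_n)$ in the theorem; this is a one-line binomial simplification.

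Next I would substitute $a=0$, $b=1$ into the conclusion of the theorem. The right-hand side becomes $r\sum_{k=1}^n\binom{k+r-1}{k-1}\frac{(k-1)!}{n!}B_{n,k}(1!c_1,2!c_2,\dots)$, which is precisely the first displayed expression in \eqref{eq:r_convolution}. The left-hand side is the $r$-fold convolution $\sum_{m_1+\dots+m_r=n}y_{m_1}\cdots y_{m_r}$, matching our notation $y_n^{(r)}$ verbatim. Thus the first equality in the corollary is immediate.

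The only remaining task is the second equality in \eqref{eq:r_convolution}, i.e.\ rewriting $r\binom{k+r-1}{k-1}\frac{(k-1)!}{n!}$ as $\binom{k+r-1}{k}\frac{k!}{n!}$. This is the elementary identity $r\binom{k+r-1}{k-1}=k\binom{k+r-1}{k}$, which follows from the absorption rule $\binom{N}{j}=\frac{N}{j}\binom{N-1}{j-1}$ (with $N=k+r-1$, $j=k$, noting $N-j=r-1$), or can be checked directly by expanding both sides as $\frac{(k+r-1)!}{(k-1)!\,r!}\cdot r$ versus $\frac{(k+r-1)!}{k!\,(r-1)!}\cdot k$. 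Multiplying through by $\frac{(k-1)!}{n!}$ (resp.\ $\frac{1}{n!}$) completes the rewriting term by term.

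Honestly, there is no genuine obstacle here: the corollary is a pure restatement of a special case, so the ``hard part,'' such as it is, amounts to carefully tracking the binomial coefficients through the substitution and confirming that the two cosmetic forms of the coefficient agree. If one wanted a self-contained argument not invoking \cite{BGW14a}, one could instead derive it from the partial Bell convolution formula of \cite{BGW12} together with Fa\`a di Bruno applied to $Y(t)^r=(1/Q(t))^r$; but given that the theorem is already available in the excerpt, the specialization route is the natural and shortest proof.
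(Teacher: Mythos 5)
Your proposal is correct and matches the paper's own route: the corollary is presented there precisely as the $a=0$, $b=1$ specialization of Theorem~2.1 of \cite{BGW14a}, with the same cosmetic rewriting $r\binom{k+r-1}{k-1}(k-1)!=\binom{k+r-1}{k}k!$ giving the second form. Your verification that \eqref{eq:canonicalSeq2} is the $a=0$, $b=1$ instance of the theorem's $(y_n)$ is exactly the missing one-line check, so nothing further is needed.
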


\medskip
\begin{remark}
More generally, if $\delta\ge 0$ is an integer, and if we let $y_{-1}=\cdots=y_{-\delta}=0$, then
\begin{equation*}
\sum_{m_1+\dots+m_r=n} \!\! y_{m_1-\delta}\cdots y_{m_r-\delta}
  =\sum_{k=0}^{n-\delta r} \binom{k+r-1}{k}\frac{k!}{(n-\delta r)!} B_{n-\delta r,k}(1!c_1, 2!c_2, \dots).
\end{equation*}
\end{remark}

\bigskip
Let us now revisit some of the basic examples considered in the previous section.
\begin{example}(Generalized Fibonacci) 
For $\alpha,c_1,c_2\in\mathbb{R}$, consider $(f_n)$ defined by
\begin{equation*}
  f_0=0,\;\; f_1=\alpha, \quad f_n=c_1 f_{n-1}+c_2 f_{n-2} \;\text{ for } n\ge 2.
\end{equation*}
Then, as described in Example~\ref{ex:genFibonacci}, we have $f_n=\alpha y_{n-1}$, and therefore
\begin{align*}
 \sum_{m_1+\dots+m_r=n} \!\! f_{m_1}\cdots f_{m_r}
 &= \alpha^r \sum_{m_1+\dots+m_r=n} \!\! y_{m_1-1}\cdots y_{m_r-1} \\
 &= \alpha^r \sum_{k=0}^{n-r} \binom{k+r-1}{k} \frac{k!}{(n-r)!} B_{n-r,k}(1!c_1,2!c_2,0,\dots) \\
 &= \alpha^r \sum_{k=0}^{n-r} \binom{k+r-1}{k} \binom{k}{n-r-k} c_1^{2k-n+r}c_2^{n-r-k}.
\end{align*}
\end{example}
\begin{example}(Padovan) \label{ex:Padovan_convolution}
Let $(P_n)$ be defined by
\begin{gather*}
  P_0=1, \;\; P_1=P_2=0, \quad P_n=P_{n-2}+P_{n-3} \;\text{ for } n\ge 3.
\end{gather*}
As mentioned in Example~\ref{ex:Padovan}, we have $P_n = y_{n-3}$ and so
\begin{align*}
 \sum_{m_1+\dots+m_r=n} \!\! P_{m_1+1}\cdots P_{m_r+1}
 &= \sum_{m_1+\dots+m_r=n} \!\! y_{m_1-2}\cdots y_{m_r-2} \\
 &= \sum_{k=0}^{n-2r} \binom{k+r-1}{k}\frac{k!}{(n-2r)!} B_{n-2r,k}(0,2!,3!,0, \dots) \\
 &= \sum_{k=0}^{n-2r} \binom{k+r-1}{k} \binom{k}{n-2r-2k}.
\end{align*}
And, with a little more work, we also get
\begin{equation*}
 \sum_{m_1+\dots+m_r=n} \!\! P_{m_1}\cdots P_{m_r}
 = \sum_{\ell=1}^{r}\binom{r}{\ell}\sum_{k=0}^{n-3\ell} \binom{k+\ell-1}{k} \binom{k}{n-3\ell-2k}.
\end{equation*}
\end{example}
\begin{example}(Tribonacci) \label{ex:Tribonacci_convolution}
Let $(t_n)$ be defined by
\begin{gather*}
  t_0=t_1=0,\;\; t_2=1, \quad t_n=t_{n-1}+t_{n-2}+t_{n-3} \;\text{ for } n\ge 3.
\end{gather*}
As discussed in the previous section, we have $t_n=y_{n-2}$, so
\begin{align*}
\sum_{m_1+\dots+m_r=n} \!\! t_{m_1}\cdots t_{m_r}
 &= \sum_{m_1+\dots+m_r=n} \!\! y_{m_1-2}\cdots y_{m_r-2} \\
 &= \sum_{k=0}^{n-2r} \binom{k+r-1}{k}\frac{k!}{(n-2r)!} B_{n-2r,k}(1!,2!,3!,0, \dots) \\
 &= \sum_{k=0}^{n-2r}\sum_{\ell=0}^k \binom{k+r-1}{k} \binom{k}{\ell} \binom{\ell}{n-2r-k-\ell}.
\end{align*}
\end{example}

\bigskip
We now present a recurrence relation for convolution sequences of the form \eqref{eq:r_convolution}. 
\begin{theorem} \label{thm:linearRecurrence}
For any sequence of the form $y_n = \sum\limits_{k=0}^n \frac{k!}{n!} B_{n,k}(1!c_1, 2!c_2, \dots)$,
and for $r\in\mathbb{N}$, consider the convolution sequence
\begin{equation*}
y_n^{(r)}= \sum_{m_1+\dots+m_r=n} \!\! y_{m_1}\cdots y_{m_r} \;\text{ for } n\ge 0.
\end{equation*}
Then, for $n\ge1$, we have the recurrence relation
\begin{equation} \label{eq:linearRecurrence}
 n\, y_n^{(r)} = \sum_{m=1}^{n} [n+m(r-1)]c_m\, y_{n-m}^{(r)}.
\end{equation}
\end{theorem}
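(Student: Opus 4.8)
The plan is to work with generating functions. Recall from the introduction that $Y(t) = 1 + \sum_{n\ge 1} y_n t^n = 1/Q(t)$ where $Q(t) = 1 - \sum_{m\ge 1} c_m t^m$. By definition, $y_n^{(r)}$ is the coefficient of $t^n$ in $Y(t)^r$, so setting $F(t) = Y(t)^r$ we have $F(t) = Q(t)^{-r}$, equivalently $Q(t)^r F(t) = 1$. The first step is to differentiate this relation. Logarithmic differentiation gives $F'(t)/F(t) = -r\, Q'(t)/Q(t)$, i.e.
\begin{equation*}
 Q(t) F'(t) = -r\, Q'(t) F(t).
\end{equation*}
This is a first-order linear ODE for $F$ with polynomial-in-$t$ coefficients (well, power-series coefficients in general), and it encodes the desired recurrence once we compare coefficients of $t^{n-1}$ on both sides.

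The second step is the coefficient extraction. Write $Q(t) = 1 - C(t)$ with $C(t) = \sum_{m\ge 1} c_m t^m$, so $Q'(t) = -C'(t) = -\sum_{m\ge 1} m c_m t^{m-1}$. The left-hand side $Q(t)F'(t) = F'(t) - C(t)F'(t)$ has $t^{n-1}$-coefficient equal to $n y_n^{(r)} - \sum_{m=1}^{n} c_m (n-m) y_{n-m}^{(r)}$, using $[t^{n-1}]F'(t) = n y_n^{(r)}$ and $[t^{n-1}]C(t)F'(t) = \sum_{m\ge 1} c_m (n-m) y_{n-m}^{(r)}$. The right-hand side $-rQ'(t)F(t) = r C'(t) F(t)$ has $t^{n-1}$-coefficient equal to $r \sum_{m=1}^{n} m c_m\, y_{n-m}^{(r)}$. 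Equating the two and moving the $C(t)F'(t)$ term to the right gives
\begin{equation*}
 n y_n^{(r)} = \sum_{m=1}^{n} \big[(n-m) + rm\big] c_m\, y_{n-m}^{(r)} = \sum_{m=1}^{n} \big[n + m(r-1)\big] c_m\, y_{n-m}^{(r)},
\end{equation*}
which is exactly \eqref{eq:linearRecurrence}. One should note that the sums over $m$ can be truncated at $n$ since $y_j^{(r)} = 0$ for $j < 0$, and the $m=n$ term contributes $[n+n(r-1)]c_n y_0^{(r)} = rn c_n$, consistent with $y_0^{(r)} = 1$.

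There is essentially no serious obstacle here: the argument is a clean logarithmic-derivative computation, and the only thing to be careful about is the bookkeeping of indices in the Cauchy products and the justification that $F(t) = Y(t)^r$ is a well-defined formal power series with $F(0) = 1$ (so that $1/F$ and $\log F$ make sense formally and the ODE manipulation is legitimate in $\mathbb{R}[[t]]$, or in $\mathcal{R}[[t]]$ if one only assumes $\mathcal{R}$ contains $\mathbb{Q}$ — division by $n$ forces a characteristic-zero or at least torsion-free hypothesis, which is implicit throughout the paper). An alternative, purely combinatorial route would avoid generating functions entirely: start from the closed form in Corollary~\ref{cor:r_convolution}, namely $y_n^{(r)} = \sum_{k\ge 1}\binom{k+r-1}{k}\frac{k!}{n!}B_{n,k}(1!c_1,2!c_2,\dots)$, multiply by $n$, apply Lemma~\ref{lem:keyIdentity} to rewrite $nB_{n,k}$ as $\sum_m m\binom{n}{m}(m!c_m)B_{n-m,k-1}$, and then reindex; this would reduce the claim to a binomial identity relating $\binom{k+r-1}{k}$ and $\binom{k+r-2}{k-1}$ together with the factor $[n+m(r-1)]$. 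I expect the generating-function proof to be shorter and cleaner, so I would present that one and perhaps remark on the combinatorial alternative.
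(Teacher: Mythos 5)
Your proposal is correct, but it takes a genuinely different route from the paper. The paper's proof never touches generating functions: it starts from the closed form of Corollary~\ref{cor:r_convolution}, multiplies out $\sum_m [n+m(r-1)]c_m y_{n-m}^{(r)}$, swaps the order of summation, splits the resulting inner sum into two pieces (one carrying the factor $n$, one carrying $(n-m)(r-1)$), collapses each piece via the Bell-polynomial recursion of Lemma~\ref{lem:keyIdentity} type, and finishes with the binomial identity $\binom{k+r-2}{k-2}(k+r-1)=\binom{k+r-1}{k-1}(k-1)$ — essentially the "alternative, purely combinatorial route" you sketch at the end. Your argument instead uses only the fact, recorded in the introduction via Fa\`a di Bruno, that $Y(t)=1/Q(t)$ with $Q(t)=1-\sum_m c_m t^m$; differentiating $F(t)Q(t)^r=1$ and extracting the coefficient of $t^{n-1}$ from $QF'=-rQ'F$ gives \eqref{eq:linearRecurrence} in a few lines, with the index bookkeeping you carry out correctly. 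This is shorter and does not require Corollary~\ref{cor:r_convolution} or the convolution machinery of \cite{BGW12,BGW14a} at all, which is a real economy; what the paper's route buys is that it stays entirely within the Bell-polynomial formalism the article is advertising and runs in parallel with the more general identity (arbitrary $a,b$) of the quoted Theorem. One small correction to your own commentary: logarithmic differentiation (and hence any $\mathbb{Q}\subseteq\mathcal{R}$ or torsion-freeness hypothesis) is unnecessary — since $Q(0)=1$, $Q$ is a unit in $\mathcal{R}[[t]]$, so you can differentiate $FQ^r=1$ by the product rule and cancel $Q^{r-1}$ to get $QF'+rQ'F=0$ directly, and the recurrence itself involves no division by $n$; phrased this way your proof is valid over any commutative ring, matching the generality of the paper.
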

\begin{proof} 
By definition, we have $y_0^{(r)}=1$ and $y_1^{(r)}=c_1$, so \eqref{eq:linearRecurrence} is true for $n=1$. For $n>1$, write 
\[ \sum_{m=1}^{n} [n+m(r-1)]c_m\, y_{n-m}^{(r)}=nrc_n+\sum_{m=1}^{n-1} [n+m(r-1)]c_m\, y_{n-m}^{(r)}. \]
Now, by means of the identity \eqref{eq:r_convolution}, we have
\begin{align*}
\frac{1}{r}\sum_{m=1}^{n-1}[n &+ m (r-1)] c_m   y_{n-m}^{(r)}\\
&=\frac{1}{r}\sum_{m=1}^{n-1}[n+m (r-1)] c_m 
   \sum_{k=1}^{n-m}r\tbinom{k+r-1}{k-1} \tfrac{(k-1)!}{(n-m)!}  B_{n-m,k}(1!c_1, 2!c_2,\dots) \\
&=\sum_{k=1}^{n-1}\tbinom{k+r-1}{k-1}(k-1)!
   \sum_{m=1}^{n-k} \tfrac{[n+m (r-1)]}{(n-m)!} c_m B_{n-m,k}(1!c_1, 2!c_2,\dots)\\
&=\sum_{k=2}^{n}\tbinom{k+r-2}{k-2}(k-2)!
   \sum_{m=k-1}^{n-1} \tfrac{[n+(n-m) (r-1)]}{m!} c_{n-m}  B_{m,k-1}(1!c_1, 2!c_2,\dots).
\end{align*}
We split the last equation into two terms: 
\begin{equation*}
= \sum_{k=2}^{n}\tbinom{k+r-2}{k-2}(k-2)!\left[\sum_{m=k-1}^{n-1} \tfrac{n}{m!} c_{n-m}  B_{m,k-1}
  + \sum_{m=k-1}^{n-1} \tfrac{(n-m) (r-1)}{m!} c_{n-m}  B_{m,k-1}\right].
\end{equation*}
On the one hand,
\begin{align*}
\sum_{k=2}^{n}\tbinom{k+r-2}{k-2} 
& (k-2)! \sum_{m=k-1}^{n-1} \tfrac{n}{m!} c_{n-m}  B_{m,k-1}(1!c_1, 2!c_2,\dots) \\
&= \sum_{k=2}^{n}\tbinom{k+r-2}{k-2}\tfrac{(k-2)!\,k}{(n-1)!}
     \left[\frac1{k}\!\sum_{m=k-1}^{n-1}\binom{n}{m}(n-m)!\, c_{n-m}  B_{m,k-1}(1!c_1, 2!c_2,\dots)\right] \\
&= \sum_{k=2}^{n}\tbinom{k+r-2}{k-2}\tfrac{(k-2)!\,k}{(n-1)!} B_{n,k}(1!c_1, 2!c_2,\dots).
\end{align*}
On the other hand,
\begin{align*}
\sum_{k=2}^{n}\tbinom{k+r-2}{k-2}
& (k-2)!\sum_{m=k-1}^{n-1} \tfrac{(n-m) (r-1)}{m!} c_{n-m}  B_{m,k-1} \\
&= \sum_{k=2}^{n}\tbinom{k+r-2}{k-2}\tfrac{(k-2)!(r-1)}{(n-1)!}
     \left[\sum_{m=k-1}^{n-1}\tbinom{n-1}{m} (n-m)!  c_{n-m}  B_{m,k-1}(1!c_1, 2!c_2,\dots)\right] \\
&= \sum_{k=2}^{n}\tbinom{k+r-2}{k-2}\tfrac{(k-2)!(r-1)}{(n-1)!} B_{n,k}(1!c_1, 2!c_2,\dots).
\end{align*}
Therefore,
\begin{align*}
\frac{1}{r}\sum_{m=1}^{n-1}[n + m (r-1)]c_m y_{n-m}^{(r)}
 &= \sum_{k=2}^{n}\tbinom{k+r-2}{k-2}\tfrac{(k-2)!}{(n-1)!}(k+r-1) B_{n,k}(1!c_1, 2!c_2,\dots) \\
 &= \sum_{k=2}^{n}\tbinom{k+r-1}{k-1}\tfrac{(k-1)!}{(n-1)!}B_{n,k}(1!c_1, 2!c_2,\dots),
\end{align*}
and so
\begin{align*}
\sum_{m=1}^{n-1}[n + m (r-1)]c_m y_{n-m}^{(r)}
 &= r\sum_{k=2}^{n}\tbinom{k+r-1}{k-1}\tfrac{(k-1)!}{(n-1)!}B_{n,k}(1!c_1, 2!c_2,\dots) \\
 &= n\sum_{k=2}^{n}\tbinom{k+r-1}{k}\tfrac{k!}{n!}B_{n,k}(1!c_1, 2!c_2,\dots) = n(y_n^{(r)}-rc_n).
\end{align*}
Finally, adding the term $nrc_n$ to both sides of this equation, we arrive at \eqref{eq:linearRecurrence}.
\end{proof}

To illustrate our result, we now consider a few basic examples.

\begin{example}
Let $(a_n)$ be the sequence defined by
\begin{equation*}
  a_0=1,\;\; a_1=1,\quad a_n=a_{n-1}+a_{n-2} \;\text{ for } n\ge 2.
\end{equation*}
This is a shift of the Fibonacci sequence ($a_n=F_{n+1}$), and $a_n= \sum\limits_{k=0}^n \frac{k!}{n!} B_{n,k}(1, 2, 0, \dots)$. 

By means of Corollary~\ref{cor:r_convolution}, we have
\begin{equation*}
 a_n^{(r)}=\sum\limits_{m_1+\dots+m_r=n} \!\! a_{m_1}\cdots a_{m_r}
 = \sum_{k=1}^{n} \binom{k+r-1}{k} \binom{k}{n-k},
\end{equation*} 
and according to Theorem~\ref{thm:linearRecurrence}, this sequence satisfies
\begin{equation*}
  n a_n^{(r)} = (n+r-1)a_{n-1}^{(r)} + (n+2(r-1))a_{n-2}^{(r)}.
\end{equation*} 
For $r=2,3,4$, this gives the recurrence relations (cf. \cite{Sloane}):
\begin{align*}
  n a_n^{(2)} &= (n+1)a_{n-1}^{(2)} + (n+2)a_{n-2}^{(2)}, \quad \text{(A001629)}\\
  n a_n^{(3)} &= (n+2)a_{n-1}^{(3)} + (n+4)a_{n-2}^{(3)}, \quad \text{(A001628)}\\
  n a_n^{(4)} &= (n+3)a_{n-1}^{(4)} + (n+6)a_{n-2}^{(4)}. \quad \text{(A001872)}
\end{align*} 
\end{example}
\begin{example}
Let $(a_n)$ be the sequence defined by
\begin{gather*}
  a_0=1,\; a_1=0,\; a_2=1, \text{ and} \\
  a_n=a_{n-2}+a_{n-3} \;\text{ for } n\ge 3.
\end{gather*}
This is a shifted version of the Padovan sequence, and $a_n= \sum\limits_{k=0}^n \frac{k!}{n!} B_{n,k}(0, 2!, 3!, 0, \dots)$. 

By Theorem~\ref{thm:linearRecurrence}, the corresponding convolution sequence (cf. Example~\ref{ex:Padovan_convolution})
\begin{equation*}
  a_n^{(r)}= \sum_{k=1}^{n} \binom{k+r-1}{k} \binom{k}{n-2k}
\end{equation*} 
satisfies the recurrence relation
\begin{equation*}
  n a_n^{(r)} = (n+2(r-1))a_{n-2}^{(r)} + (n+3(r-1))a_{n-3}^{(r)}.
\end{equation*}
For $r=2$, we get a shift of the sequence A228577 in \cite{Sloane} (number of gaps of length $1$ in all possible covers of a line of length $n$ by segments of length $2$). In this case, we obtain
\begin{equation*}
  n a_n^{(2)} = (n+2)a_{n-2}^{(2)} + (n+3)a_{n-3}^{(2)}.
\end{equation*}
\end{example}

\begin{example}
Let $(a_n)$ be the sequence defined by
\begin{gather*}
  a_0=1,\; a_1=1,\; a_2=2, \text{ and} \\
  a_n=a_{n-1}+a_{n-2}+a_{n-3} \;\text{ for } n\ge 3.
\end{gather*}
This is a shift of the Tribonacci sequence discussed in Examples~\ref{ex:Tribonacci} and \ref{ex:Tribonacci_convolution}. More precisely, $a_n=t_{n+2}=\sum\limits_{k=0}^n \frac{k!}{n!} B_{n,k}(1!, 2!, 3!, 0, \dots)$, and the convolution sequence $(a_n^{(r)})$ takes the form 
\begin{equation*}
  a_n^{(r)} = \sum_{k=1}^{n} \binom{k+r-1}{k} \sum_{\ell=0}^k \binom{k}{k-\ell} \binom{k-\ell}{n+\ell-2k},
\end{equation*} 
which satisfies
\begin{equation*} 
 n a_n^{(r)} = (n+r-1) a_{n-1}^{(r)}+(n+2(r-1)) a_{n-2}^{(r)}+(n+3(r-1)) a_{n-3}^{(r)}.
\end{equation*}
For $r=2$, we obtain a recurrence relation for A073778 in \cite{Sloane}:
\begin{equation*} 
 n a_n^{(2)} = (n+1) a_{n-1}^{(2)}+(n+2) a_{n-2}^{(2)}+(n+3) a_{n-3}^{(2)}.
\end{equation*}
\end{example}


\end{document}